\setlist{topsep=0mm,partopsep=0mm,itemsep=1mm}
\theoremstyle{plain}
\newtheorem{thm}{Theorem}[section]
\newtheorem{cor}[thm]{Corollary}
\newtheorem{prop}[thm]{Proposition}
\theoremstyle{definition}
\newtheorem{example}[thm]{Example}
\newtheorem{definition}[thm]{Definition}
\theoremstyle{remark}
\newtheorem{cclaim}{Claim}[thm]
\newenvironment{claim}{\begin{cclaim}\it}{\end{cclaim}}
\newcommand{\N}{\mathbb{N}}
\newcommand{\Z}{\mathbb{Z}}
\newcommand{\R}{\mathcal{R}}
\renewcommand{\L}{\mathcal{L}}
\newcommand{\J}{\mathcal{J}}
\newcommand{\BR}{\textup{\textsf{BR}}}
\newcommand{\B}{\mathcal{B}}
\newenvironment{thmenumerate}{\begin{enumerate}[label=\textup{(\roman*)},leftmargin=10mm]}{\end{enumerate}}
\newenvironment{nitemize}{\begin{itemize}[label=\textbullet, leftmargin=5mm]}{\end{itemize}}
\newcommand{\CS}{\mathcal{C}^1(S;\alpha,\beta;P)}
\title{Semigroup Congruences and Subsemigroups of the Direct Square}
\author[C.\ Barber]{Callum Barber}
\address{School of Mathematics and Statistics, University of St Andrews, St Andrews, Fife KY16 9SS, UK.}
\email{cjb38@st-andrews.ac.uk}
\author[N.\ Ru\v{s}kuc]{Nik Ru\v{s}kuc}
\address{School of Mathematics and Statistics, University of St Andrews, St Andrews, Fife KY16 9SS, UK.}
\email{nik.ruskuc@st-andrews.ac.uk}
\keywords{Semigroup, congruence, subsemigroup, simple semigroup}
\subjclass[2020]{20M10, 08A30, 20M12}
\begin{document}

\maketitle

\begin{abstract}
We investigate semigroups $S$ which have the property that every subsemigroup of $S\times S$ which contains the diagonal $\{ (s,s)\colon s\in S\}$ is necessarily a congruence on $S$. 
We call such $S$  a  DSC semigroup.
It is well known that all finite groups are DSC, and easy to see that every DSC semigroup must be simple. Building on this, we show that for broad classes of semigroups -- including periodic, stable, inverse and several well-known types of simple semigroups -- the only DSC members are groups. However, it turns out that there exist non-group DSC semigroups, which we obtain utilising a construction introduced by Byleen for the purpose of constructing interesting congruence-free semigroups. Such examples can additionally be regular or bisimple.
\end{abstract}

\section{Introduction}

Given an algebra $A$, a congruence on $A$ is an equivalence relation that is compatible with the operations of the algebra. 
We can also think of $\rho$ as a subset of the direct product $A \times A$.
So instead of $\rho$ being reflexive we can think of $\rho$ as containing the diagonal $\Delta = \{(x,x) \colon  x \in A \}$, and the notion of $\rho$ respecting the operations then becomes $\rho$ being a subalgebra of $A \times A$. 
Motivated by this we give the following definition.

\begin{definition}
    Let $A$ be an algebra. A \textit{diagonal subalgebra} $\rho$ of $A \times A$ is a subalgebra of $A \times A$ that contains the diagonal $\Delta = \{(x,x) \colon  x \in A \}$. A \textit{congruence} on $A$ is a diagonal subalgebra of $A \times A$, such that for all $x,y,z \in A$ we have:
\[
        (x,y) \in \rho \Rightarrow (y,x) \in \rho \quad\textup{and}\quad (x,y),(y,z) \in \rho \Rightarrow (x,z) \in \rho.
\]
\end{definition}

It is a well known, easy fact that for groups diagonal subgroups and congruences are one and the same.

\begin{prop}
    \label{Motivation Theorem}
    Let $G$ be a group. Then the diagonal subgroups of $G \times G$ are precisely the congruences on $G$.
\end{prop}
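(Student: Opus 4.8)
The plan is to prove both inclusions. The nontrivial direction is showing that a diagonal subgroup $\rho \le G \times G$ is automatically symmetric and transitive (the reverse direction, that every congruence is a subgroup containing $\Delta$, is immediate from the definition of congruence together with the fact that $\rho$ being compatible with the group operation makes it a subalgebra). So assume $\rho$ is a subgroup of $G \times G$ with $\Delta \subseteq \rho$.

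For symmetry, suppose $(x,y) \in \rho$. Since $\rho$ is a subgroup it is closed under taking inverses, so $(x^{-1}, y^{-1}) \in \rho$. Also $(y,y) \in \Delta \subseteq \rho$ and $(x,x) \in \Delta \subseteq \rho$. Now multiply inside $\rho$: $(y,y)\cdot(x^{-1},y^{-1})\cdot(x,x) = (yx^{-1}x,\, y y^{-1} x) = (y, x) \in \rho$, as required. For transitivity, suppose $(x,y),(y,z)\in\rho$. Using symmetry already established, $(y,x)\in\rho$, hence $(x,x)^{-1}\cdot$-type manipulation: multiply $(x,y)\cdot(y,x)\cdot(y,z)$? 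Better: from $(x,y)\in\rho$ and $(y,z)\in\rho$ and $(y,y)\in\rho$ we get $(x,y)\cdot(y,y)^{-1}\cdot(y,z) = (x y^{-1} y,\, y y^{-1} z) = (x,z)\in\rho$. That handles transitivity directly without even needing symmetry first.

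So the key steps, in order, are: (1) observe the easy inclusion — a congruence is by definition a diagonal subalgebra, and for a group the operation-compatibility is exactly the subgroup condition, so congruences are diagonal subgroups; (2) for the converse, take a diagonal subgroup $\rho$ and verify transitivity via $(x,y)(y,y)^{-1}(y,z) = (x,z)$; (3) verify symmetry via $(y,y)(x,y)^{-1}(x,x)$ or the computation above; (4) conclude $\rho$ is an equivalence relation compatible with multiplication, hence a congruence. I expect no real obstacle here — the only thing to be a little careful about is making sure the "diagonal subalgebra" framework from the definition is correctly unwound so that "subalgebra of $G\times G$" genuinely means "subgroup" (i.e.\ closed under the binary operation \emph{and} under inversion, which for groups it is, since a subset of a finite group closed under multiplication is a subgroup, and in general the signature of groups includes inversion). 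The proof is essentially two three-line coset-style computations.
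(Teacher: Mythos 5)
Your proof is correct and uses essentially the same computations as the paper: $(y,x)=(y,y)(x,y)^{-1}(x,x)$ for symmetry and $(x,z)=(x,y)(y,y)^{-1}(y,z)$ for transitivity, which are exactly the paper's identities. No issues.
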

\begin{proof}
This is regarded as folklore, but we provide a short proof for completeness.
    By definition any congruence is a diagonal subgroup. If $\rho$ is a diagonal subgroup, and if $(x,y),(y,z) \in \rho$, then, bearing in mind that $(x,x),(y,y),(y^{-1},y^{-1})\in \rho$, we have:
    \[        (y,x)  = (y,y)(x,y)^{-1}(x,x) \in \rho \quad\text{and}\quad
        (x,z)  = (x,y)(y^{-1},y^{-1})(y,z) \in \rho.
   \]
    Hence $\rho$ is a congruence. 
\end{proof}

The same result holds more general for any algebras $A$ with a Mal'cev term, i.e. term $m(x,y,z)$ in three variables such that
$m(x,y,y)=x=m(y,y,x)$ holds for all $x,y\in A$; see for example \cite[Theorem 4.70]{MM18}.
In particular, the result holds for rings, associative and Lie algebras, loops and quasigroups.
However it does not hold for semigroups, as the following easy example shows:

\begin{example}
    Consider the left zero semigroup $S = \{ x,y \}$ with multiplication $ab = a$ for all $a,b \in S$. The set $\rho = \{ (x,x) , (x,y) , (y,y) \}$ is a diagonal subsemigroup of $S \times S$ but is not a congruence on $S$.
\end{example}

Motivated by this we give the following definition.
\begin{definition}
    \label{DSC}
    We will say that a semigroup is \textit{DSC} if every diagonal subsemigroup is a congruence.
\end{definition}

Over the course of this paper we will see that DSC semigroups are few and far between. 
In fact, with any of a number of additional mild assumptions, the only DSC semigroups are groups.
A further wrinkle worth keeping in mind is that, despite Proposition \ref{Motivation Theorem}, not even all groups are DSC, due to the fact that a group may contain subsemigroups that are not subgroups. Here is a concrete example

\begin{example}
Let $\Z$ denote the infinite cyclic group. Then $\{ (x,y) \in \Z \times \Z \colon  x \leq y \}$ 
is a diagonal subsemigroup of $\Z\times\Z$, but is not a congruence.
\end{example}

Of course, this `anomaly' cannot arise for finite, or indeed periodic, groups.

We can prove, in full generality, that all DSC semigroups are simple (Theorem \ref{simple semigroup}).
Proceeding from there, we prove for a semigroup $S$:

\begin{nitemize}
\item
Supposing $S$ is finite or periodic, $S$ is DSC if and only if $S$ is a group; Corollaries~\ref{finite semigroup},~\ref{periodic semigroup}.
\item
If $S$ is a stable or inverse DSC semigroup then $S$ is a group; Corollary~\ref{stable semigroup}, Theorem~\ref{Inverse semigroup}.
\end{nitemize}

Focusing on special classes of simple semigroups, we also have:

\begin{nitemize}
\item
If $S$ is a completely simple DSC semigroup then $S$ is a group; Theorem \ref{Completely simple}.
\item
For any semigroup $S$ and any endomorphism $\theta\colon S\rightarrow S$, the Bruck--Reilly extension $\BR(S,\theta)$ is not DSC; Theorem \ref{bruck-reilly}.
\item
For any two infinite cardinals $p \geq q$, the generalized Baer--Levi semigroup $\B(p,q)$ is not DSC;
Theorem \ref{Baer Levi}.
\end{nitemize}

In Theorem \ref{quotient} we will prove that the class of DSC semigroups is closed under quotients.
Thus, one might wish to look for possible non-group examples among simple, congruence-free semigroups.
Byleen \cite{Byleen} gives a construction which, under certain conditions, yields such semigroups.
It turns out that we can deploy this construction to show that
    \begin{nitemize}
        \item There exist non-group DSC semigroups; Corollary \ref{cor:D} \ref{it:D1}.
        \item Furthermore, there are such examples that are regular and bisimple; Corollary~\ref{cor:D}~\ref{it:D2}.
    \end{nitemize}

As a byproduct we also observe that the class of DSC semigroups is not closed under subsemigroups;
Corollary \ref{cor:D} \ref{it:D3}.

We will require only very basic concepts from semigroup theory. They will be introduced within the text where they are needed first. For a more systematic introduction we refer the reader to any standard monograph such as \cite{Ho95}.
We will use $\N$ to denote the set of all positive integers, and $\N_0$ for $\N\cup\{0\}$.

\section{Completely Simple, Stable and Inverse semigroups.}
\label{sec:SSI}

In this section we will show that all DSC semigroups belonging to certain classes  are in fact groups.
Specifically, we will do this for completely simple, stable and inverse semigroups, in that order.

A non-empty subset $I$ of a semigroup $S$ is said to be an \emph{ideal} if for all $x\in I$ and all $s\in S$ we have $sx,xs\in I$.
A semigroup is said to be \emph{simple} if it has no ideals other than itself.

\begin{thm}\label{simple semigroup}
    Any DSC semigroups is simple.
\end{thm}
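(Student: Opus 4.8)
The plan is to establish the contrapositive: if $S$ is not simple, then $S$ is not DSC. So suppose $S$ has a proper ideal $I$, i.e.\ $\emptyset \neq I \subsetneq S$.

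The first, naive guess — the Rees congruence $\Delta \cup (I \times I)$ — is useless here, as it genuinely is a congruence; the trick is to break symmetry instead. I would consider
\[
\rho = \Delta \cup (S \times I).
\]
Each element of $\rho$ has the form $(s,x)$ with either $x = s$ or $x \in I$, so in a product $(s,x)(t,y) = (st, xy)$ the second coordinate either equals $st$ (when $x=s$ and $y=t$) or lies in $I$ (in the other three cases, using $SI, IS, II \subseteq I$, which hold since $I$ is an ideal). Hence $\rho$ is a subsemigroup of $S \times S$, and since $\Delta \subseteq \rho$ it is a diagonal subsemigroup.

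It remains to see that $\rho$ is not a congruence, and for this it is enough that $\rho$ fails to be symmetric. Since $I$ is proper and non-empty we may pick $a \in S \setminus I$ and $b \in I$; then $a \neq b$, so $(a,b) \in S \times I \subseteq \rho$ while $(b,a) \notin \Delta$ (as $a \neq b$) and $(b,a) \notin S \times I$ (as $a \notin I$), whence $(b,a) \notin \rho$. Thus $S$ has a diagonal subsemigroup that is not a congruence, so $S$ is not DSC, and the theorem follows.

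I expect no serious obstacle: the computation that $\rho$ is multiplicatively closed is a short four-case check, and the only genuinely used hypotheses are that $I$ is a two-sided ideal and that it is both proper (giving the witness $a$) and non-empty (giving the witness $b$).
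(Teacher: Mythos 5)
Your proof is correct and is essentially the paper's argument: the paper uses the diagonal subsemigroup $\Delta \cup (I \times S)$ and the same choice of witnesses to break symmetry, so your $\Delta \cup (S \times I)$ is just the transpose of the same relation. No gaps; the four-case closure check and the use of $I$ being a proper, non-empty two-sided ideal match the paper's reasoning exactly.
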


\begin{proof}
    Suppose $S$ is not simple. Let $I$ be a proper ideal of $S$. It is easily seen that $\rho = I \times S \cup  \Delta$ is a diagonal subsemigroup of $S \times S$. If we take $x \in I$ and $y \in S \setminus I$ then $(x,y) \in \rho$ but $(y,x) \notin \rho$. Hence $\rho$ is not a congruence and $S$ is not a DSC semigroup.
\end{proof}

Let $S$ be a semigroup, and let $E$ be the set of idempotents of $S$.
The relation $\leq$ on $E$ defined by $e\leq f \iff ef=fe=e$ is a partial order.
Any minimal element in this partial order is said to be \emph{primitive}.
A simple semigroup $S$ is said to be \emph{completely simple} if it has a primitive idempotent. 
All finite simple semigroups are completely simple.

There is a complete structural description of completely simple semigroups, originally due to Suschkewitsch
\cite{Su28}.
Let $G$ be a group, let $I$ and $J$ be two index sets, and let $P=(p_{ji})_{j\in J,i\in I}$ be a $J\times I$ matrix with entries from $G$.
The \emph{Rees matrix semigroup} $\mathcal{M}[G;I,J;P]$ is the set $I\times G\times J$
with multiplication 
$(i , g , j)(k , h , l) = (i, g p_{jk} h , l)$.
Suschkewitsch's Theorem then asserts that a semigroup $S$ is completely simple if and only if it is isomorphic to some Rees matrix semigroup $\mathcal{M}[G;I,J;P]$; see \cite[Theorem~3.3.1]{Ho95}.

\begin{thm}\label{Completely simple}
    Let $S$ be a completely simple semigroup. If $S$ is DSC then $S$ is a group.
\end{thm}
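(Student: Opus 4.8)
The plan is to invoke Suschkewitsch's Theorem to write $S\cong\mathcal{M}[G;I,J;P]$, and then to prove the contrapositive: if $S$ is not a group then $S$ is not DSC. Since a group has a single $\H$-class whereas $\mathcal{M}[G;I,J;P]$ has exactly $|I|$ many $\mathcal{R}$-classes and $|J|$ many $\L$-classes, ``$S$ not a group'' means $|I|\geq 2$ or $|J|\geq 2$, and in either case $S$ has at least two distinct $\H$-classes. So I would fix elements $a\neq b$ of $S$ lying in distinct $\H$-classes, so that $a\not\mathrel{\mathcal{R}}b$ or $a\not\mathrel{\L}b$, and I would take $\rho$ to be the subsemigroup of $S\times S$ generated by $\Delta\cup\{(a,b)\}$. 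This $\rho$ is automatically a diagonal subsemigroup, and it will suffice to show that $(b,a)\notin\rho$, for then $\rho$ is not symmetric, hence not a congruence, and so $S$ is not DSC.

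The central step is an explicit description of the elements of $\rho$. Because $\Delta$ is itself a subsemigroup, grouping consecutive diagonal factors of an arbitrary product of generators shows that every element of $\rho$ either lies in $\Delta$ or has the form
\[
  \bigl(\,s_0 a s_1 a\cdots a s_n,\ s_0 b s_1 b\cdots b s_n\,\bigr),\qquad n\geq 1,\quad s_0,\dots,s_n\in S^1,
\]
where we read ``$s_i=1$'' as ``$s_i$ is absent''. Note that the two coordinates are words of exactly the same shape, with $a$ (respectively $b$) occupying the slots where the generator $(a,b)$ occurred. Now I would use the elementary feature of a Rees matrix semigroup that in any product $x_1x_2\cdots x_r$ the first coordinate (the $\mathcal{R}$-class index) coincides with that of $x_1$, and dually the third coordinate coincides with that of $x_r$.

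Suppose for contradiction that $(b,a)\in\rho$; as $a\neq b$ it is not in $\Delta$, so it equals a pair of the displayed form. If $a\not\mathrel{\mathcal{R}}b$, compare first coordinates: if $s_0$ is present, both coordinates have the first coordinate of $s_0$, forcing the first coordinates of $a$ and $b$ to agree; if $s_0$ is absent, the left coordinate has the first coordinate of $a$ and must equal $b$, again forcing agreement. Either way $a\mathrel{\mathcal{R}}b$, a contradiction. If instead $a\not\mathrel{\L}b$, the same argument applied to the third coordinates via $s_n$ gives $a\mathrel{\L}b$, again a contradiction. Hence $(b,a)\notin\rho$, so $\rho$ is a diagonal subsemigroup that is not a congruence, contradicting the DSC hypothesis. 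Therefore $S$ must be a group.

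The main obstacle I anticipate is purely bookkeeping: pinning down precisely the set of elements of $\langle\Delta\cup\{(a,b)\}\rangle$ (in particular handling the ``absent'' factors $s_i$ uniformly, whether or not $S$ happens to be a monoid), and making sure the two coordinates of such an element really are words of matching shape, so that the comparison of $\mathcal{R}$-classes (or $\L$-classes) may be carried out in both coordinates simultaneously. Once the form of the elements is established, the rest follows immediately from the Rees matrix multiplication rule.
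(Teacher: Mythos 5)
Your proof is correct. It follows the same overall strategy as the paper --- represent $S$ as $\mathcal{M}[G;I,J;P]$ via Suschkewitsch's Theorem and exhibit a diagonal subsemigroup that fails symmetry --- but the witness is built differently. The paper writes down an explicit $\rho$ in closed form (all pairs agreeing in the $J$-coordinate whose $I$-coordinates are either equal or the fixed ordered pair $(i,k)$), so that closure is a routine check and non-symmetry is immediate; you instead take the subsemigroup generated by $\Delta\cup\{(a,b)\}$ for $a,b$ in distinct $\R$- or $\L$-classes, and the work shifts to describing its elements. Your normal form $(s_0as_1\cdots as_n,\ s_0bs_1\cdots bs_n)$ with $s_i\in S^1$ is right, and the observation that the $I$-index (resp.\ $J$-index) of a Rees matrix product equals that of its first (resp.\ last) factor does rule out $(b,a)$ in both cases, including when $s_0$ (or $s_n$) is absent. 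A small advantage of your version is that it treats $|I|>1$ and $|J|>1$ in one argument, where the paper handles the second case ``analogously''; the price is the bookkeeping about the generated subsemigroup, which you carry out correctly.
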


\begin{proof}
    Let $S$ be a completely simple semigroup.
    By, Suschkewitsch's Theorem without loss of generality we may assume $S = \mathcal{M}[G;I,J;P]$.
    If $|I| > 1$ then pick $i \neq k \in I$. Now consider the set:
        \begin{align*}
        \rho & = \bigl\{  \bigl((i,g,j),(k,h,j)\bigr) \colon g,h\in G,\ j\in J\bigr\}\\ 
        &\cup \bigl\{ \bigl(l,g,j),(l,h,j)\bigr) \colon l \in I,\ g,h \in G ,\ j \in J \bigr\}.
    \end{align*}
    It is routine to verify that $\rho$ is a diagonal subsemigroup.
    For an arbitrary $j\in J$  it is easily seen that $\bigl((i,1,j),(k,1,j)\bigr) \in \rho$ but 
    $\bigl((k,1,j),(i,1,j)\bigr) \notin \rho$. 
    Hence $\rho$ is not a congruence, contradicting $\mathcal{M}[G;I,J;P]$ being DSC. 
    Therefore it must then be the case that $|I| = 1$ and, analogously, $|J| = 1$.
    It now easily follows that $S\cong G$, a group.
\end{proof}

We know from Theorem \ref{simple semigroup} that a DSC semigroup $S$ must be simple. 
Whenever we can show that under some additional assumptions $S$ must in fact be completely simple, Theorem \ref{Completely simple} will force $S$ to be a group. 
We deploy this strategy for stable and inverse semigroups.

In order to define stability, we need to introduce Green's equivalences $\R$, $\L$ and $\J$ on a semigroup $S$:
\[
s\R t \Leftrightarrow sS^1=tS^1,\quad s\L t \Leftrightarrow S^1s=S^1t,\quad
s\J t \Leftrightarrow S^1sS^1=S^1tS^1;
\]
for a detailed introduction see \cite[Section 2.1]{Ho95}.
We then say that $S$ is \emph{stable} if the following implications hold:
\[
x \J sx \Rightarrow x \L sx \quad\text{and}\quad x \J xs \Rightarrow x \R xs \qquad\text{for all 
$s,x\in S$.}
\]
All finite semigroups are stable \cite[Theorem A.2.4]{RS09}.  

By \cite[Theorem A.4.15]{RS09} every stable simple semigroup is completely simple, and so we have:

\begin{cor}\label{stable semigroup}
    Let $S$ be a stable semigroup. If $S$ is DSC then $S$ is a group. \qed
\end{cor}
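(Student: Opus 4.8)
The plan is to assemble the corollary from three facts already in hand, so the argument is essentially a one-line chaining and there is no genuine obstacle to overcome — the substantive work was done in Theorem~\ref{Completely simple} and in the quoted structure theory.

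First I would invoke Theorem~\ref{simple semigroup}: since $S$ is DSC, $S$ is simple. Next I would use the fact recorded just before the statement, namely \cite[Theorem A.4.15]{RS09}, that every stable simple semigroup is completely simple; applying it to our simple semigroup $S$ (which is stable by hypothesis) shows $S$ is completely simple. Finally I would appeal to Theorem~\ref{Completely simple}, which says a completely simple DSC semigroup is a group, to conclude that $S$ is a group.

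If one wanted to avoid citing \cite[Theorem A.4.15]{RS09} as a black box, the alternative would be to prove directly that a stable simple semigroup contains a primitive idempotent: stability forces the $\J$-class structure to behave well enough that $\mathcal{D}=\mathcal{J}$ and that one can produce an idempotent $e$ minimal under $\leq$, after which Suschkewitsch's Theorem gives the Rees matrix form. But since the excerpt explicitly permits use of that reference, I would not pursue this and would keep the proof to the three-step chain above; accordingly the corollary is stated with \qed and needs no separate proof block.
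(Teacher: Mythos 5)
Your three-step chain (DSC implies simple by Theorem~\ref{simple semigroup}, stable plus simple implies completely simple by the cited \cite[Theorem A.4.15]{RS09}, then Theorem~\ref{Completely simple}) is exactly the justification the paper intends, which is why the corollary is stated with no separate proof. Your proposal is correct and matches the paper's approach.
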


A semigroup $S$ is said to be \emph{periodic} if for every $s\in S$ there exist distinct $m,n\in\N$ such that $s^m=s^n$. Every finite semigroup is periodic. 

\begin{cor}\label{periodic semigroup}
    Let $S$ be a periodic semigroup. Then $S$ is DSC  if and only if $S$ is a group.
\end{cor}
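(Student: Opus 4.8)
The plan is to prove the two implications separately, with the forward direction reducing entirely to machinery already in place.

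For the easy direction ($\Leftarrow$), suppose $S$ is a periodic group. The key point is that in a periodic group every subsemigroup is a subgroup: if $T\le S$ and $a\in T$, then $a$ has finite order $m$, so $1=a^m\in T$ and $a^{-1}=a^{m-1}\in T$. A direct product of periodic groups is again a periodic group, so the same holds in $S\times S$. Hence every diagonal subsemigroup of $S\times S$ is a diagonal subgroup, and therefore a congruence by Proposition~\ref{Motivation Theorem}; thus $S$ is DSC.

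For the converse ($\Rightarrow$), suppose $S$ is periodic and DSC. By Corollary~\ref{stable semigroup} it suffices to show that every periodic semigroup is stable, i.e.\ that $x\mathrel{\J}sx\Rightarrow x\mathrel{\L}sx$ for all $s,x\in S$ (the other implication being the left--right dual). From $x\mathrel{\J}sx$ one gets $x=asxb$ for some $a,b\in S^1$; writing $u=as\in S$ and iterating yields $x=u^n x b^n$ for all $n\ge 1$ (with $b^n=1$ if $b=1$). Using periodicity, pick $p,q\ge 1$ with $u^p$ and $b^q$ idempotent; taking $n=pq$ collapses this to $x=u^p x b^q$, from which $u^p x=x$. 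Since $u^p=(as)^p\in S^1s$, this gives $x\in S^1(sx)$, and as $S^1(sx)\subseteq S^1x$ always holds, we conclude $x\mathrel{\L}sx$. Stability of $S$ then forces $S$ to be a group via Corollary~\ref{stable semigroup}.

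The only genuine content is the auxiliary fact that periodic semigroups are stable; this is essentially folklore (it holds more generally for epigroups), and I expect the short computation above to be the main — and only mildly fiddly — step, the fiddliness being just the bookkeeping of the cases $a=1$, $b=1$ coming from $S^1$. Everything else follows immediately from the results already established.
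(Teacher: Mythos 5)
Your proposal is correct and follows essentially the same route as the paper: the backward direction reduces to Proposition~\ref{Motivation Theorem} by noting that periodicity makes every diagonal subsemigroup of $S\times S$ a diagonal subgroup, and the forward direction goes through stability of periodic semigroups and Corollary~\ref{stable semigroup}. The only difference is that you prove the auxiliary fact that periodic semigroups are stable explicitly (and your computation, via $x=u^n x b^n$, idempotent powers, and $u^p x = x$, is correct), whereas the paper simply cites the literature for this step.
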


\begin{proof}
($\Rightarrow$)
By \cite[Theorem A.2.4]{RS09}, every finite semigroup is stable. In fact, the proof is valid under the weaker assumption of periodicity; see also proof of \cite[Corollary 3.1]{An65}. 
This direction now follows from Corollary \ref{stable semigroup}.

($\Leftarrow$)
    Suppose $S$ is a group. Let $\rho$ be a diagonal subsemigroup of $S \times S$. We claim that $\rho$ is also a diagonal subgroup of $S \times S$, and the result then follows from Proposition \ref{Motivation Theorem}. Let $(x,y) \in \rho$. As $S$ is periodic, $x^a = 1 = y^b$ for some $a,b \in \N$. Then $(x,y)^{-1} = (x^{-1},y^{-1}) = (x^{ab - 1},y^{ab - 1}) = (x,y)^{ab-1} \in \rho$.
\end{proof}

\begin{cor}\label{finite semigroup}
    Let $S$ be a finite semigroup. Then $S$ is DSC  if and only if $S$ is a group.
    \qed
\end{cor}

We finish this section with a discussion of inverse semigroups.
An element $s\in S$ is said to be \emph{regular} if $sts=s$ for some $t\in S$.
If in addition $tst=t$ we say that $s$ and $t$ are (semigroup) inverses of each other.
It is known that an element is regular if and only if it has an inverse; see \cite[p.~51]{Ho95}.
The semigroup $S$ is \emph{regular} if every element is regular, and it is \emph{inverse} if every element has a unique inverse.

\begin{thm}\label{Inverse semigroup}
    Let $S$ be an inverse semigroup. If $S$ is DSC then $S$ is a group.
\end{thm}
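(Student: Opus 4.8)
The plan is to prove the stronger statement that a DSC inverse semigroup has exactly one idempotent, since an inverse semigroup whose idempotent set $E(S)$ is a singleton $\{e\}$ is a group: for each $s$ both $ss^{-1}$ and $s^{-1}s$ equal $e$, so $es=ss^{-1}s=s$ and $se=ss^{-1}s=s$, making $e$ a two-sided identity, and each $s$ is then invertible. Thus it suffices to produce, whenever $|E(S)|\ge 2$, a diagonal subsemigroup of $S\times S$ that is not a congruence.

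The candidate is the natural partial order itself. Recall that on an inverse semigroup $S$ the relation $s\le t\iff s=et$ for some $e\in E(S)$ (equivalently $s=ss^{-1}t$) is a partial order which is compatible with multiplication, i.e.\ $s_1\le t_1$ and $s_2\le t_2$ imply $s_1s_2\le t_1t_2$; see \cite[Section~5.2]{Ho95}. First I would check that $\rho:=\{(s,t)\in S\times S:\ s\le t\}$ is a diagonal subsemigroup: it contains $\Delta$ since $\le$ is reflexive, and it is closed under the coordinatewise multiplication by compatibility. (If one prefers not to quote compatibility, one can instead write $\rho=\{(es,s):\ e\in E(S),\ s\in S\}$ and verify closure by hand, using only that idempotents of an inverse semigroup commute and that $ueu^{-1}\in E(S)$ whenever $e\in E(S)$; the diagonal lies in $\rho$ because $ss^{-1}\in E(S)$ and $ss^{-1}s=s$.)

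The main step is then to observe that $\rho$ fails to be symmetric as soon as $|E(S)|\ge 2$. Since $E(S)$ is a semilattice, for distinct $e,f\in E(S)$ the idempotent $ef$ satisfies $ef\le e$ and $ef\le f$, and it cannot equal both (else $e=ef=f$); hence there exist idempotents $g\ne h$ with $g\le h$, so $(g,h)\in\rho$ while $(h,g)\notin\rho$ by antisymmetry, and $\rho$ is not a congruence. Consequently a DSC inverse semigroup has $|E(S)|=1$ and is a group. I do not anticipate a genuine obstacle here; the only point requiring some care is the multiplicative compatibility of the natural partial order, which is standard. I would also remark that this argument is slightly sharper than the completely-simple route flagged before the statement: one could instead note that a simple inverse semigroup possessing a primitive idempotent is completely simple, hence — being inverse — a group by Theorems~\ref{simple semigroup} and~\ref{Completely simple}, but extracting a primitive idempotent from the DSC hypothesis essentially reduces to the computation above.
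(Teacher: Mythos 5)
Your proposal is correct, and its core is exactly the paper's construction: take the natural partial order $\le$ on the inverse semigroup, observe that $\rho=\{(x,y): x\le y\}$ is a diagonal subsemigroup by compatibility of $\le$ with multiplication, and note that DSC forces $\rho$ to be symmetric, which together with antisymmetry makes the order trivial. The only divergence is in the endgame: the paper converts triviality of the order into the existence of a primitive idempotent and then invokes the machinery already set up (DSC $\Rightarrow$ simple by Theorem~\ref{simple semigroup}, simple with a primitive idempotent $\Rightarrow$ completely simple, and then Theorem~\ref{Completely simple}), whereas you argue directly that a trivial order on the semilattice $E(S)$ forces $|E(S)|=1$ (via $ef\le e$, $ef\le f$ for distinct $e,f$) and that an inverse semigroup with a unique idempotent is a group. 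Your finish is self-contained and does not even use simplicity of $S$, which is a mild sharpening; the paper's finish is shorter given that Theorems~\ref{simple semigroup} and~\ref{Completely simple} are already available and fits the stated strategy of reducing to the completely simple case. Both are complete and correct.
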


\begin{proof}
    Let $S$ be an inverse DSC semigroup. There is a natural partial order on $S$ given by $x \leq y \iff x = ey$ for some idempotent $e$; see \cite[Section 5.2]{Ho95}. It is also true that this partial order is compatible with the multiplication and restricts to the natural partial order on the idempotents.
    So $\rho = \{ (x,y) \in S \times S \colon  x \leq y \}$ is a diagonal subsemigroup and by assumption is a congruence. Hence $\leq$ is both symmetric and antisymmetric, and therefore there exists a primitive idempotent.
\end{proof}

We started this paper by introducing the concept of diagonal subalgebra for general algebras and we have looked at the cases when the algebra is a group or semigroup.
At this point we have enough theory to ask this question for inverse semigroups as well.

\begin{thm}\label{Diagonal Inverse Subsemigroup}
    Let $S$ be an inverse semigroup. Then every diagonal inverse subsemigroup of $S \times S$ is a congruence if and only if $S$ is a group.
\end{thm}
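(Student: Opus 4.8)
The plan is to establish the two implications directly, with the forward one essentially re-running the proof of Theorem~\ref{Inverse semigroup} after one extra observation, and the reverse one being a quick reduction to Proposition~\ref{Motivation Theorem}.

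For the ``if'' direction, suppose $S$ is a group. Then $S\times S$ is a group, and since the semigroup inverse of an element of a group coincides with its group inverse, an inverse subsemigroup of $S\times S$ is precisely a subsemigroup closed under coordinatewise inversion, i.e.\ a subgroup. Hence a diagonal inverse subsemigroup of $S\times S$ is a diagonal subgroup, and is therefore a congruence by Proposition~\ref{Motivation Theorem}.

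For the ``only if'' direction, I would again consider $\rho=\{(x,y)\in S\times S:x\le y\}$, where $\le$ is the natural partial order on $S$. As recalled in the proof of Theorem~\ref{Inverse semigroup}, $\le$ is compatible with multiplication and restricts to the natural order on idempotents, so $\rho$ is a diagonal subsemigroup; the additional input needed here is the standard fact that $\le$ is preserved by inversion, i.e.\ $x\le y\Rightarrow x^{-1}\le y^{-1}$, which makes $\rho$ a diagonal \emph{inverse} subsemigroup. By hypothesis $\rho$ is then a congruence, hence symmetric, and together with the antisymmetry of $\le$ this forces $\le$ to be the equality relation on $S$. To conclude, I would note that for any $e,f\in E(S)$ the product $ef$ is an idempotent (idempotents of an inverse semigroup commute) with $ef\le e$ and $ef\le f$, so $e=ef=f$; thus $S$ has a single idempotent, which is then a two-sided identity relative to which every element has an inverse, so $S$ is a group.

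I do not anticipate a genuine obstacle here: both verifications are short, the compatibility of $\le$ with the operations is quoted from the proof of Theorem~\ref{Inverse semigroup}, and the one point worth checking carefully — the only ingredient new relative to that theorem — is simply that $\rho$ is closed under the inverse operation, which is the routine fact $x\le y\Rightarrow x^{-1}\le y^{-1}$.
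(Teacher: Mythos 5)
Your proof is correct, and its core coincides with the paper's: both directions hinge on the same ingredients, namely Proposition \ref{Motivation Theorem} for the ``if'' part and, for the ``only if'' part, the natural partial order $\rho=\{(x,y): x\le y\}$ together with the fact $x\le y\Rightarrow x^{-1}\le y^{-1}$, which is exactly the ``only difference'' the paper highlights relative to Theorem \ref{Inverse semigroup}. Where you diverge is the endgame of the forward direction. The paper, following Theorem \ref{Inverse semigroup}, stops at ``$\le$ is trivial, hence a primitive idempotent exists'' and implicitly routes the conclusion through Theorem \ref{simple semigroup} (simplicity) and Theorem \ref{Completely simple} (completely simple $\Rightarrow$ group); note that under the weaker hypothesis of Theorem \ref{Diagonal Inverse Subsemigroup} this requires the extra observation that the diagonal subsemigroups used in those proofs are themselves inverse (e.g.\ $I\times S\cup\Delta$ is closed under inversion because ideals of inverse semigroups are), or a separate remark that a completely simple inverse semigroup is a group. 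Your finish avoids all of that: from $\le$ being equality you deduce, via $ef\le e$ and $ef\le f$ for commuting idempotents, that $S$ has a unique idempotent, which is then an identity with respect to which every element is invertible. This is more elementary and self-contained, and it neatly sidesteps the hypothesis mismatch (inverse-diagonal vs.\ full DSC) that the paper's ``similar to Theorem \ref{Inverse semigroup}'' leaves to the reader; the paper's version, in exchange, reuses the completely simple machinery already developed. Your treatment of the ``if'' direction (inverse subsemigroups of a group are subgroups, then quote Proposition \ref{Motivation Theorem}) is the same argument as the paper's in substance, just packaged as a reduction rather than a repeat of the computation.
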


\begin{proof}
    The proof of $(2) \Rightarrow (1)$ is identical to the proof of Proposition \ref{Motivation Theorem}. The proof of $(1) \Rightarrow (2)$ is similar to the proof of Theorem \ref{Inverse semigroup}. The only difference is that we need to show $\rho$ is a diagonal inverse subsemigroup, which follows from the fact that $x \leq y \Rightarrow x^{-1} \leq y^{-1}$.
\end{proof}

\section{Some Further Infinite Non-DSC Semigroups}

As we have seen in the previous section there exist no non-group completely simple, stable or inverse DSC semigroups.
So if we want to find a non-DSC semigroup we will have to look a bit harder. 
We know that any DSC semigroup is simple, and we will explore different constructions leading to examples of simple semigroups.

One such is the Rees matrix semigroup construction, which we have already encountered, but which can be deployed in greater generality. Specifically, instead of starting with a group $G$, we can start with an arbitrary semigroup $S$. Keeping the remainder of the definition from Section \ref{sec:SSI} unchanged, we obtain the Rees matrix semigroup
$\mathcal{M}[S;I,J;P]$.
It is easy to check, e.g.\ by using \cite[Corollary 3.1.2]{Ho95}, that $\mathcal{M}[S;I,J;P]$ is simple if and only if $S$ is simple. 
By an analogous proof of Theorem \ref{Completely simple}, we can see that if $S^\prime = \mathcal{M}[S;I,J;P]$ is DSC then $|I| = |J| = 1$.
Hence $S^\prime $ has multiplication $x \cdot y = xay$ for some $a \in S$.
We claim that if $S^\prime$ is DSC then $S$ must also be DSC.
Let $\rho$ be a diagonal subsemigroup of $S$, define $\rho^\prime \subseteq S^\prime \times S^\prime$ by $(x,y) \in \rho^\prime \iff (x,y) \in \rho$. 
As $\rho$ contains the diagonal, so does $\rho^\prime$. 
And if we have $(x,y),(z,t) \in \rho^\prime$ then $(x,y),(z,t) \in \rho$, which gives $(xaz,yat) \in \rho$ and hence $(x\cdot z,y \cdot t) \in \rho^\prime$. 
Hence $\rho^\prime$ is a diagonal subsemigroup and, as $S^\prime$ is DSC, a congruence on $S^\prime$.
It follows that $\rho$ is also a congruence on $S$, and therefore $S$ must be DSC.

We have just seen that in order for $\mathcal{M}[S;I,J;P]$ to be DSC, $S$ must also be DSC. 
So for finding more DSC semigroups, taking a Rees matrix semigroup will not be much help as we would have to know whether our original semigroup was DSC to begin with. 

Another way to construct simple semigroups is the Bruck--Reilly extension \cite[Section~5.6]{Ho95}. Here we take a semigroup $S$ and an endomorphism $\theta$  of $S$. The \emph{Bruck--Reilly extension} $\BR(S,\theta)$ is the set $\N_0 \times S \times \N_0$ with multiplication
\begin{equation*}
    (m,s,n)(p,t,q) = (m - n + k , (s \theta^{k-n})(t \theta^{k - p}) , q - p + k)\quad
    \text{where }k = \max(n,p).
\end{equation*}
Under certain conditions the semigroup $\BR(S,\theta)$ is simple.
The conditions themselves will not concern us, but the reader can consult Proposition 5.6.6 and Exercise 5.25 in~\cite{Ho95} for two examples.
Unfortunately, again, this construction will not works for us.
One can show directly that \emph{no} Bruck---Reilly extension is DSC, but it is easier to use some of the theory we have built up in the previous section.
Additionally, we will need the following result.

\begin{thm}\label{quotient}
    Let $S$ be a DSC semigroup and let $\sigma$ be a congruence on $S$. Then the quotient $S/\sigma$ is also DSC.
\end{thm}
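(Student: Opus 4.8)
The plan is to pull an arbitrary diagonal subsemigroup of $(S/\sigma)\times(S/\sigma)$ back along the natural projection to a diagonal subsemigroup of $S\times S$, invoke the DSC hypothesis on $S$, and then transfer the resulting symmetry and transitivity back down to the quotient. Concretely, write $\pi\colon S\to S/\sigma$ for the natural map $x\mapsto x\sigma$, and let $\overline{\rho}$ be any diagonal subsemigroup of $(S/\sigma)\times(S/\sigma)$. The first step is to form the full preimage
\[
\rho = (\pi\times\pi)^{-1}(\overline{\rho}) = \{(x,y)\in S\times S\colon (x\sigma,y\sigma)\in\overline{\rho}\}
\]
and check that it is a diagonal subsemigroup of $S\times S$: it contains $\Delta$ because $\overline{\rho}$ contains the diagonal of $S/\sigma$, and it is a subsemigroup because $\pi$ is a homomorphism (this is where one uses that $\sigma$ is a congruence, not merely an equivalence) and $\overline{\rho}$ is closed under multiplication. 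Since $S$ is DSC, $\rho$ is therefore a congruence on $S$.

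The second step is to transfer symmetry and transitivity back to $\overline{\rho}$. The key observation is that, because $\rho$ is the \emph{full} preimage, membership $(x,y)\in\rho$ depends only on the classes $x\sigma$ and $y\sigma$; in particular $(\pi\times\pi)(\rho)=\overline{\rho}$ and, conveniently, any pair of representatives of a pair of classes lying in $\overline{\rho}$ already lies in $\rho$. Hence if $(\overline{a},\overline{b})\in\overline{\rho}$, pick representatives $a,b$, note $(a,b)\in\rho$, apply symmetry of $\rho$ to get $(b,a)\in\rho$, and conclude $(\overline{b},\overline{a})\in\overline{\rho}$. Similarly, if $(\overline{a},\overline{b}),(\overline{b},\overline{c})\in\overline{\rho}$, pick representatives $a,b,c$; then $(a,b),(b,c)\in\rho$ — crucially using the \emph{same} $b$, which is legitimate precisely because $\rho$ is a full preimage — so transitivity of $\rho$ yields $(a,c)\in\rho$ and hence $(\overline{a},\overline{c})\in\overline{\rho}$. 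Thus $\overline{\rho}$ is a congruence on $S/\sigma$, so $S/\sigma$ is DSC.

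I do not anticipate a genuine obstacle here; the one point that needs care is exactly the remark in the second step, that taking the full preimage lets one choose representatives freely, so that verifying symmetry and transitivity of $\overline{\rho}$ reduces cleanly to the same properties for $\rho$. As an alternative finish, one may instead quote the correspondence theorem for semigroup congruences: since $\sigma\subseteq\rho$ are both congruences on $S$, the relation $\rho/\sigma$ is a congruence on $S/\sigma$, and by construction $\rho/\sigma=\overline{\rho}$.
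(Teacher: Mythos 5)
Your proposal is correct and follows essentially the same route as the paper: both pull $\overline{\rho}$ back to the full preimage $\rho=\{(x,y)\colon(x\sigma,y\sigma)\in\overline{\rho}\}$, check it is a diagonal subsemigroup, invoke the DSC hypothesis, and push symmetry and transitivity back down via representatives. The only addition is your closing remark about the correspondence theorem, which is a fine but unnecessary alternative finish.
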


\begin{proof}
    Let $\rho$ be a diagonal subsemigroup of $S/\sigma \times S / \sigma$.
    Define $\rho^\prime \subseteq S \times S$ by $(x,y) \in \rho^\prime \iff (x\sigma , y\sigma) \in \rho$. 
    For each $x \in S$, $(x\sigma,x \sigma) \in \rho$ so $(x,x) \in \rho^\prime$. 
    If $(x,y),(z,t) \in \rho^\prime$ then $(x \sigma ,y \sigma), (z \sigma, t\sigma) \in \rho$. 
    This implies $(xz \sigma,yt \sigma) \in \rho$ and so $(xz,yt) \in \rho^\prime$. Hence $\rho^\prime$ is a diagonal subsemigroup of $S \times S$ which by assumption is also a congruence. 
    Now
    \begin{align*}
        &(x \sigma,y \sigma) \in \rho \Rightarrow (x,y) \in \rho^\prime \Rightarrow (y,x) \in \rho^\prime \Rightarrow (y \sigma , x \sigma ) \in \rho, \\
       & (x \sigma , y \sigma) , (y \sigma , z \sigma) \in \rho \Rightarrow (x,y),(y,z) \in \rho^\prime \Rightarrow (x,z)\in \rho^\prime \Rightarrow (x \sigma,z \sigma) \in \rho.
    \end{align*}
    Hence $\rho$ is a congruence and $S/\sigma$ is DSC.
\end{proof}

In particular, the homomorphic image of a DSC semigroup is DSC. This results makes it much easier to check if a semigroup is not DSC.

\begin{thm}\label{bruck-reilly}
    For any semigroup $S$ and endomorphism $\theta$ of $S$, the Bruck--Reilly extension $\BR(S,\theta)$ is not DSC.
\end{thm}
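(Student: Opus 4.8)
The plan is not to construct a bad diagonal subsemigroup by hand, but to reduce to the inverse case via closure under quotients. Every Bruck--Reilly extension maps onto the bicyclic monoid, and the bicyclic monoid is inverse but not a group, so it cannot be DSC; Theorems~\ref{quotient} and~\ref{Inverse semigroup} then close the argument.

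In detail, let $B$ be the bicyclic monoid: the set $\N_0\times\N_0$ with product $(m,n)(p,q)=(m-n+k,\,q-p+k)$, where $k=\max(n,p)$ (equivalently $B=\BR(T,\mathrm{id}_T)$ for $T$ the trivial monoid). I would define $\phi\colon\BR(S,\theta)\to B$ by $(m,s,n)\mapsto(m,n)$. Inspecting the multiplication in $\BR(S,\theta)$, one sees that the first and third coordinates of a product are computed by exactly the same $\max$-formula as in $B$, independently of the middle coordinates and of $\theta$; hence $\phi$ is a homomorphism. It is surjective, since $\phi(m,s,n)=(m,n)$ for any choice of $s\in S$. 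Therefore $B$ is (isomorphic to) a quotient of $\BR(S,\theta)$.

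Next I would invoke the standard description of $B$ as an inverse monoid with $(m,n)^{-1}=(n,m)$ whose idempotents are precisely the pairs $(n,n)$, $n\in\N_0$; in particular $B$ has more than one idempotent and so is not a group. By Theorem~\ref{Inverse semigroup} an inverse DSC semigroup is a group, so $B$ is not DSC. Finally, by Theorem~\ref{quotient} a quotient of a DSC semigroup is DSC; since $B$ is a quotient of $\BR(S,\theta)$ but is not DSC, $\BR(S,\theta)$ cannot be DSC either. (When $S$ is trivial one has $\BR(S,\theta)\cong B$, which this argument also covers.)

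There is no real obstacle in this approach. The only points needing a line of verification are that $\phi$ respects multiplication --- routine, since the $\theta$-powers that complicate the formula sit entirely in the middle coordinate, which $\phi$ forgets --- and the elementary fact, available from any standard reference, that the bicyclic monoid is inverse with its idempotents forming a chain, hence is not a group. Alternatively one could bypass Theorem~\ref{Inverse semigroup} and show $B$ is not DSC directly, using the natural partial order on $B$ exactly as in that proof; but routing through the theorem is shorter.
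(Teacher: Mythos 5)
Your proposal is correct and follows exactly the route of the paper's own proof: project $\BR(S,\theta)$ onto its first and third coordinates to obtain the bicyclic monoid as a homomorphic image, observe that the bicyclic monoid is inverse but not a group (hence not DSC by Theorem \ref{Inverse semigroup}), and conclude via the quotient-closure result, Theorem \ref{quotient}. The extra details you supply (surjectivity of the projection and the standard facts about the bicyclic monoid) are just the routine verifications the paper leaves implicit.
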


\begin{proof}
Let $B$ denote the \emph{bicyclic monoid}, the semigroup with underlying set $\N_0 \times \N_0$ and multiplication $(m,n)(p,q) = (m-n+k,q-p+k)$, where $k = \max(n,p)$; see \cite[Section 1.6]{Ho95}. 
    The bicyclic monoid is a homomorphic image of $\BR(S,\theta)$ via the projection onto the first and third coordinates.
    It is known that $B$ is an inverse semigroup but not a group; e.g. see \cite[Section 5.4]{Ho95}.
    So, by Theorem \ref{Inverse semigroup}, $B$ is not DSC.
    Hence $\BR(S,\theta)$ is not DSC either, by Theorem~\ref{quotient}.
\end{proof}

The final kind of simple semigroups we will look at in this section are the generalized \emph{Baer--Levi semigroups}; see \cite[Section 8.1]{CP67v2}. 
Here we take two infinite cardinals $p$ and $q$ with $p \geq q$. 
Let $X$ be a set of cardinality $p$ and consider the set 
\[
B(p,q) = \{ f\colon X \to X\: \colon\: f \text{ is injective and } | X \setminus Xf| = q \}.
\]
Under the usual composition of functions $B(p,q)$ forms a semigroup. 
This semigroup is always  simple (even right simple) and contains no idempotents \cite[Theorem 8.2]{CP67v2}, so potentially makes a good candidate for a non-group DSC semigroup. 
Unfortunately, this hope too turns out to be unjustified:

\begin{thm}\label{Baer Levi}
    For any two infinite cardinals $p \geq q$, the generalized Baer--Levi semigroup $B(p,q)$ is not DSC.
\end{thm}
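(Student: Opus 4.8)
My plan is to exhibit a diagonal subsemigroup of $B(p,q)\times B(p,q)$ which is not symmetric, and hence not a congruence. Write $B=B(p,q)$, fix a set $X$ with $|X|=p$, and let functions act on the right, so that $fg$ means ``first $f$, then $g$'' and $X(fg)=(Xf)g$. The first step is to record the structural features of $B$ that make it a plausible DSC candidate and that any argument must exploit: $B$ is right cancellative ($uh=vh\Rightarrow u=v$, since $h$ is injective) but \emph{not} left cancellative; it is right simple, so $fB=B$ for every $f\in B$, while each $Bf$ is a \emph{proper} left ideal with $f\notin Bf$ (because $hf=f$ forces $h=\operatorname{id}_X\notin B$); and $B$ contains no idempotents. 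In particular, any diagonal subsemigroup $\rho$ containing an off-diagonal pair $(f,g)$ automatically contains every $(fh,gh)$ (each off the diagonal, by right cancellativity), every $(hf,hg)$, and all products; so the content is to choose the seed pair so that the diagonal subsemigroup it generates never captures $(g,f)$.

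The obvious candidates for a compatible non-symmetric relation — the $\leq_{\mathcal L}$-order $\{(g,f):g\in B^1 f\}$ (antisymmetric since $\mathcal L$ is trivial on $B$, and nontrivial since $Bf\subsetneq B$), or relations comparing images $Xf$ or left ideals $Bf$ — all fail to be closed under the coordinatewise product: they are right-compatible but not left-compatible, since for $g=\phi f$ one cannot in general push $\phi$ past a left multiplier $h$ while keeping it in the relevant family (an arbitrary $h\in B$ need not preserve the set on which $\phi$ is active). So the plan is to repair such a relation: fix $Z\subseteq X$ with $|Z|=q$ and $|X\setminus Z|=p$, let $T=\{\phi\in B:\phi|_{X\setminus Z}=\operatorname{id}\}$ (a subsemigroup of $B$ isomorphic to $B(q,q)$, acting on $Z$), and define $\rho$ from seed pairs of the form $(f,\psi f)$ with $\psi\in T$; the asymmetry is meant to come from choosing $\psi\in T$ whose restriction to $Z$ is injective with a \emph{proper} $q$-codense image, so that $\psi$ has no left inverse inside the idempotent-free semigroup $T$ — and one argues this obstruction survives right-multiplication by arbitrary $f$ and the closure process, by tracking an invariant (e.g.\ the action of $f$ on its co-image $X\setminus Xf$, or the ``deficiency'' of the image inside $Z$). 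As a partial contingency, Theorem~\ref{quotient} offers a second route: the relation $\sigma=\{(f,g):|\{x\in X:xf\ne xg\}|<p\}$ is a proper congruence on $B$, so it suffices to show $B/\sigma$ is not DSC; for $q<p$ one checks that every $f\in B$ has $\mathbb Z$-orbits covering a subset of $X$ of size $p$, whence $B/\sigma$ is a nontrivial group all of whose non-identity elements have infinite order, and — using that a group is DSC exactly when every normal submonoid is a subgroup — one shows this group admits a normal submonoid that is not a subgroup; the case $q=p$, where $B/\sigma$ is not a group, needs a separate, direct argument.

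The step I expect to be the main obstacle is getting \emph{closure} of $\rho$ right \emph{while} preserving asymmetry: the closure conditions forced by $\Delta$ are strong enough that a carelessly chosen relation collapses to a symmetric one. For instance $\Delta\cup\{(f,f\phi):\phi\in B,\ \phi\text{ moves at most }q\text{ points}\}$ \emph{is} a diagonal subsemigroup — closure works because $\phi h=h\theta$ for a suitable $\theta$ that again moves only $\le q$ points — but it equals the congruence ``$f$ and $g$ differ in at most $q$ places'', and so is useless. Thus the crux is to pin down a seed pair (or a family $T$ of twists) whose generated diagonal subsemigroup genuinely omits the reverse pair, and this is exactly where the combinatorics of injective self-maps of $X$ with $q$-sized co-image has to be used.
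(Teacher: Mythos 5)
There is a genuine gap: your proposal is a plan, not a proof, and by your own admission the decisive step is missing. The relation you actually write down, namely the one generated by $\Delta$ and seed pairs $(f,\psi f)$ with $\psi\in T$, is never shown to be closed under the coordinatewise product -- and the natural attempt fails, since $(f,\psi f)(g,\psi' g)=(fg,\psi f\psi' g)$ and there is no way in general to rewrite $\psi f\psi'$ as $\psi'' f$ with $\psi''\in T$ (one cannot push $\psi'$ leftwards past an arbitrary $f$). Nor do you exhibit an invariant showing that the diagonal subsemigroup generated by such a seed omits the reversed pair; you explicitly flag this as ``the main obstacle'' and leave it unresolved. The fallback route via Theorem~\ref{quotient} is likewise incomplete: the congruence $\sigma=\{(f,g):|\{x:xf\neq xg\}|<p\}$ is fine, and for $q<p$ the quotient is indeed a group, but you do not produce the promised normal submonoid that is not a subgroup, and the case $q=p$ is deferred entirely. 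So neither branch reaches a contradiction with the DSC property.

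The deeper issue is strategic: you committed to breaking \emph{symmetry}, which is hard in $B(p,q)$ (it is right simple and right cancellative, so off-diagonal pairs propagate freely), whereas a congruence must also be \emph{transitive}, and that is far easier to violate here. The paper's proof takes $\rho=\{(f,g): (X\setminus Xf)\cap(X\setminus Xg)\neq\emptyset\}$. This contains $\Delta$ because co-images are nonempty, and it is closed under products because $X(ff')\subseteq Xf'$, so the co-image of a product contains the co-image of the right-hand factor; hence $\rho$ is a diagonal subsemigroup. It is obviously symmetric, but one builds $f,g,h\in B(p,q)$ whose co-images are $A'$, $B'\cup\{x\}$, $B'$ with $A'\cap B'=\emptyset$ and $x\in A'$, so that $(f,g),(g,h)\in\rho$ while $(f,h)\notin\rho$: transitivity fails and $\rho$ is not a congruence. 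Your write-up never considers attacking transitivity, and without either a verified asymmetric diagonal subsemigroup or a completed quotient argument, the statement remains unproved.
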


\begin{proof}
    Let $B = B(p,q)$.
    Consider the set 
    \[
    \rho = \bigl\{ (f,g) \in B \times B \colon (X \setminus X f) \cap (X \setminus X g) \neq \emptyset \bigr\}.
    \]
     It is easy to check that $\rho$ is a diagonal subsemigroup of $B \times B$. It is also clear that $\rho$ is symmetric, so we will show $\rho$ is not transitive.
    Partition $X$ into two sets $A$ and $B$ with cardinality $p$. Let $A^\prime$ and $ B^\prime$ be subsets of $A$ and $B$ respectively both with cardinality $q$.
    Let $x \in A^\prime$. The sets $X \setminus A^\prime , X \setminus (B^\prime \cup {x} )$ and $X \setminus B^\prime$ all have cardinality $p$. 
    So there are bijections 
    \[
    f^\prime\colon X \to X \setminus A^\prime ,\quad  g^\prime \colon X \to X \setminus (B^\prime \cup {x})\quad \text{and}\quad h^\prime \colon  X \to X \setminus B^\prime.
    \] 
    Each can be extended to an injection from $X$ to itself; call these functions $f,g$ and $h$ respectively.
    Note that $X\setminus Xf = A^\prime, X\setminus Xg = B^\prime \cup \{ x \}$ and $X\setminus Xh = B^\prime$. So $f,g,h \in B$ and $(f,g),(g,h) \in \rho$, but $(f,h) \notin \rho$. Hence $\rho$ is not transitive.
\end{proof}

\section{ Non-Group DSC Semigroups}

In Theorem \ref{quotient} we saw that any quotient of a DSC semigroup must be DSC.
So if we have a semigroup $S$ with congruence $\sigma$, for $S$ to be DSC so must $S/\sigma$. 
This gives us an extra constraint on being DSC.
So we will try looking at congruence free semigroups.
One rather general such construction was introduced by Byleen in \cite{Byleen}.
The construction uses the notions of monoid actions and presentations, which we now briefly review.

Let $S$ be a monoid with identity $1$, and let $A$ be a set. A \emph{right action} of $S$ on $A$ is a function
$A\times S\rightarrow A$, $(a,s)\mapsto a\triangleright s$ such that
$(a\triangleright s)\triangleright t=a\triangleright (st)$ and $a\triangleright 1=a$ for all $a\in A$ and all $s,t\in S$.
The action is said to be \emph{faithful} if for any two distinct $s,t\in S$ there exists $a\in A$ such that
$a\triangleright s\neq a\triangleright t$.
A \emph{left action} of $S$ on a set $B$ is defined analogously.
For more details see  \cite[Section 8.1]{Ho95}.

Now suppose that $X$ is an alphabet, and denote by $X^\ast$ the \emph{free monoid} on $X$; it consists of all words over $X$, including the empty word $\epsilon$, and the operation is concatenation. 
A \emph{monoid presentation} is a pair of the form $\langle X\mid R\rangle$, where $R\subseteq X^\ast\times X^\ast$.
The \emph{monoid defined} by this presentation is $S=X^\ast/\rho$, where $\rho$ is the congruence generated by $R$. The elements of this semigroup are the $\rho$-classes $[u]$, $u\in X^\ast$.
An \emph{elementary sequence} with respect to $\langle X\mid R\rangle$ is any sequence 
$w_1,w_2,\dots, w_n$ ($n\geq 1$) of words from $X^\ast$ such that for every $i=1,\dots,n-1$ we
have $w_i=w'uw''$, $w_{i+1}=w'vw''$ for some $w',w''\in X^\ast$ and some $(u,v)\in R$ or $(v,u)\in R$.
For two words $u,v\in X^\ast$ we have $[u]=[v]$ if and only if there exists an elementary sequence starting at $u$ and ending in $v$.
We will abuse notation and write $u$ instead of $[u]$ for a typical element of $S$, and $u=v$ instead of $(u,v)$ for a typical element of $R$.
For a more detailed basic introduction to presentations see \cite[Section 1.6]{Ho95}.

\begin{definition}
    Let $S$ be a monoid with identity $1$, and let $A$ and $B$ be sets that are disjoint from each other and from $S$.
    Let $\alpha \colon  A\times S\rightarrow A, (a,s)\mapsto a\triangleright s$ be a right action, and let $\beta\colon  S \times B \rightarrow B, (s,b) \mapsto s \triangleleft b$ be a left action.
    Let $W = A \cup B \cup S$ and let $P$ be a $A \times B$ matrix with entries in $W$.
    Let $\CS$ denote the monoid with monoid presentation:
    \begin{equation*}
        \bigl\langle W \mid ab = p_{a,b} ,\ as = a \triangleright s ,\ sb = s \triangleleft b ,\ st = s \cdot t ,\ 1 = \epsilon \quad (a \in A ,\ b \in B ,\ s,t \in S) \bigr\rangle.
    \end{equation*}
\end{definition}

In the above presentation, the relation $st=s \cdot t$ should be interpreted as a word of length~$2$, namely $st$, being equal to a word of length $1$, the product of $s$ and $t$ in $S$.
In other words, those relations represent the inclusion of the Cayley table of $S$ in the defining presentation for $\CS$.

In \cite{Byleen} it is shown that any element of $\CS$ can be written uniquely in the form $vsu$ where $v \in B^\ast, s \in S$ and $ u \in A^\ast$. 
The monoid $\CS$ has identity $1 = \epsilon$.  
Calculations are easy in $\CS$ as each relation (other than $1 = \epsilon$) replaces a word of length $2$ with a word of length $1$.
In general, this semigroup need not be DSC. We now introduce some additional conditions which will then imply DSC.

\begin{definition}
    Let $A,B$ and $C$ be non-empty sets, and let $P=(p_{ab})_{a\in A,b\in B}$ be an $A\times B$ matrix with entries from $C$. We say that $P$ is 
   2-\emph{transitive} if the following hold:
    \begin{enumerate}
        \item for every $a_1 \neq a_2 \in A$ and $c_1,c_2 \in C$ there exists $b \in B$ such that $p_{a_1,b} = c_1$ and $p_{a_2,b} = c_2$;
        \item for every $b_1 \neq b_2 \in B$ and $c_1,c_2 \in C$ there exists $a \in A$ such that $p_{a,b_1} = c_1$ and $p_{a,b_2} = c_2$.
    \end{enumerate}
\end{definition}

We will be interested in $2$-transitive $A \times B$ matrices with entries in $W = A \cup B \cup S$. As $|W| \geq |A|,|B|$,  the sets $A$ and $B$ will of necessity be infinite.
For an explicit construction when $A$ and $B$ are countably infinite see \cite{QR2010}.

The proof of the following result closely follows Byleen's proof showing that $\CS$ is congruence-free.
Our proof will be divided into more cases as we have to work around the parts that use symmetry and transitivity.

\begin{thm}\label{byleen}
    The monoid $\CS$, with $\alpha,\beta$ faithful monoid actions and $P$ a 2-transitive matrix over $W = A \cup B \cup S$, has only two diagonal subsemigroups.
\end{thm}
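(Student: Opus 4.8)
The plan is to show that the only diagonal subsemigroups of $\CS \times \CS$ are the diagonal $\Delta$ itself and the whole of $\CS \times \CS$. So let $\rho$ be a diagonal subsemigroup with $\rho \neq \Delta$, and aim to prove $\rho = \CS \times \CS$. Fix a pair $(x,y) \in \rho$ with $x \neq y$. Using the normal form, write $x = v s u$ and $y = v' s' u'$ with $v,v' \in B^\ast$, $s,s' \in S$, $u,u' \in A^\ast$. The strategy, following Byleen, is to left- and right-multiply $(x,y)$ by carefully chosen diagonal elements $(w,w)$ to ``collapse'' the normal forms, eventually landing a pair of the form $(a, a')$ or $(s_0, s_1)$ of short words that are still distinct, and from there to generate everything.

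First I would record the basic reduction rules in $\CS$: right-multiplying a normal form $vsu$ by an element of $A$ only changes the $A^\ast$-tail (via $as = a\triangleright s$, $ab = p_{a,b}$, etc.), left-multiplying by an element of $B$ only changes the $B^\ast$-head, and multiplying by an element of $S$ adjusts the middle. The key maneuver is: pick $a \in A$ appearing suitably and multiply $(x,y)$ on the right by $(\bar a, \bar a)$ for a long word $\bar a \in A^\ast$ chosen so that the $A^\ast$-tails of both $x$ and $y$ get absorbed — using faithfulness of $\alpha$ and $2$-transitivity of $P$ to control what $p_{a,b}$ produces when a $B$-letter meets an $A$-letter. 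Symmetrically multiply on the left by some $\bar b \in B^\ast$ to absorb the $B^\ast$-heads. The case division the authors warn about arises here: depending on whether $x$ and $y$ already agree on their $B^\ast$-head, on their $A^\ast$-tail, or on the middle $S$-part, one needs different choices, because without symmetry/transitivity of $\rho$ we cannot freely reverse or compose steps. After this collapsing, $\rho$ contains a pair of ``simple'' distinct elements — ideally two distinct elements $s_0 \neq s_1$ of $S$, or a pair $(a_0, a_1)$ with $a_0, a_1 \in A$, or $(b_0, b_1)$ with $b_0,b_1 \in B$.

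Next, from such a simple pair I would manufacture a pair $(p,q) \in \rho$ with, say, $p = 1$ and $q \neq 1$, by multiplying appropriately: e.g. if $(a_0, a_1) \in \rho$ with $a_0 \ne a_1 \in A$, choose $b \in B$ via $2$-transitivity with $p_{a_0, b} = 1$ (the identity $\epsilon$) and $p_{a_1, b}$ equal to some chosen $c \neq 1$; then $(b,b) (a_0,a_1) = (1, c) \in \rho$, having used $1 = \epsilon$ and $ba_i = ?$ — more precisely one multiplies on the appropriate side so the product $a_i b$ or $b a_i$ hits the matrix. Once $(1,c) \in \rho$ with $c \neq 1$, multiply on left and right by arbitrary diagonal pairs $(w, w)$ to get $(w_1 w_2, w_1 c w_2) \in \rho$ for all $w_1, w_2 \in \CS$; since $w_1 w_2$ ranges over all of $\CS$ (taking $w_2 = 1$, $w_1$ arbitrary gives $(w_1, w_1 c)$), and iterating/combining these, one shows $\rho$ contains $(x, xz)$ for a generating set of elements $z$, and then by taking products and using that $\rho$ is merely a subsemigroup containing $\Delta$, one bootstraps to all pairs. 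A clean way: show $\rho \supseteq \{(x, xc^n w): \dots\}$ is enough, or directly show for any $m, m' \in \CS$ that $(m, m') \in \rho$ by writing $m' = m \cdot (\text{something}) $ — since $\CS$ is a monoid this always has a solution-ish form only if $c$ generates enough, so more carefully one uses the congruence-free structure: the set $\{(p,q) : (p,q) \in \rho\}$ already being a congruence-like object forces it, but we only have a subsemigroup, so the honest route is to produce, for each pair $(m,m')$, an explicit factorization $m' = u m v$ and $m = u m' v$... which fails in general, so instead one produces $1$-related-to-everything: show $(1, w) \in \rho$ for all $w \in \CS$ by hitting each generator, then $(m, m) (1, w) = (m, mw) \in \rho$, and also $(w,1) \in \rho$ by the same token applied from the collapsing argument run symmetrically, giving $(m,m)(w,1)(m',m') $-type products to reach $(mwm', m m') $; choosing these to cover all pairs then finishes.

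The main obstacle is the collapsing step and its case analysis: with $\rho$ only assumed a diagonal subsemigroup (not symmetric, not transitive), every manipulation must be realized by multiplication on one specified side by a genuine diagonal element, and one cannot undo a reduction. So proving that an arbitrary distinct pair $(x,y)$ can be pushed down to a ``simple'' distinct pair — rather than accidentally collapsing to a pair $(z,z)$ on the diagonal — is where care is needed, and this is exactly why the argument ``will be divided into more cases'' than Byleen's. I expect the bulk of the work to be: (a) checking that faithfulness of $\alpha$, $\beta$ lets us separate the middle $S$-components once the $A^\ast$ and $B^\ast$ parts are stripped, and (b) checking that $2$-transitivity of $P$ gives enough freedom to choose the multiplying words so that the two coordinates reduce at ``compatible rates'' and remain distinct. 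The final bootstrap from a simple pair to all of $\CS \times \CS$ should be comparatively routine, relying on $1 = \epsilon \in S$ and $2$-transitivity to realize the identity as a matrix entry.
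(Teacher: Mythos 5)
Your overall strategy is the paper's: take a distinct pair $(vsu,ytx)$, multiply it on either side by diagonal elements to collapse the normal forms while keeping the two coordinates distinct, use $2$-transitivity and faithfulness to steer the result to arbitrary pairs of generators, and then bootstrap (the paper does this by showing $W\times W\subseteq\rho$ and factoring any $(w,w')$ as $(w_1,1)\cdots(w_n,1)(1,w'_1)\cdots(1,w'_m)$; your ``get $(1,w)$ and $(w,1)$ for all generators'' ending amounts to the same thing and is fine). However, the heart of the proof is missing, and the mechanics you do give are partly backwards. Right-multiplying by a word $\bar a\in A^\ast$ does not absorb the $A^\ast$-tail -- it lengthens it; the only length-reducing relations are $ab=p_{a,b}$ with $a\in A$, $b\in B$ (together with $as$, $sb$, $st$), so the $A^\ast$-tail is collapsed by right-multiplying by $B$-letters, and dually the $B^\ast$-head by left-multiplying by $A$-letters. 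Likewise your step $(b,b)(a_0,a_1)=(1,c)$ uses products $ba_i$, which do not reduce at all; you need $(a_0,a_1)(b,b)=(p_{a_0,b},p_{a_1,b})$. You hedge on ``which side,'' but pinning this down is exactly the content of the argument.

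More substantively, the two lemmas that make the collapse work are absent. The paper proves, by induction on $|u|+|x|$, that any \emph{distinct} $u,x\in A^\ast$ admit a right diagonal multiplier $\lambda$ with $(u,x)\lambda=(1,p)$ or $(p,1)$ for some nonempty $p$ -- when the last letters $a_n\neq a'_m$ differ one chooses, by $2$-transitivity, $b$ with $a_nb=a_n$ and $a'_mb=1$, so distinctness is never accidentally destroyed (this is precisely the danger you flag but do not avert); and it then shows that from such a pair $(1,p)$ one can reach \emph{any} target $(w_1,w_2)\in W\times W$ by choosing $b_0,\dots,b_n$ with $pb_n=b_0$, $b_n\neq b_0$, and $a$ with $ab_n=w_1$, $ab_0=w_2$. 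Without the second lemma you only obtain whatever entries the matrix happens to hand you, which is not enough to conclude $W\times W\subseteq\rho$. Finally, in the case where both the $A^\ast$-tails and the $B^\ast$-heads differ, one must still check that after the $B$-side is absorbed into the $S$- and $A$-parts via the actions the two coordinates remain distinct; the paper gets this from the length inequality $|p_1|\neq|p_2|$ (one collapsed tail is empty, the other is not), whereas your ``reduce at compatible rates and remain distinct'' names the issue without resolving it. As it stands the proposal is a correct outline of the intended approach, but the case analysis and reduction claims that constitute the actual proof are not carried out.
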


\begin{proof}
    Let $T = \CS$. We will show that $\Delta=\{ (t,t)\colon t\in T\}$ and $T \times T$ are the only diagonal subsemigroups of $T \times T$.
    To this end we will consider arbitrary distinct $vsu, ytx \in T$ and show that the subsemigroup $\rho$ of $T\times T$ generated by $(vsu,ytx)$ and $\Delta$ is equal to $T \times T$.
    Note that if $W \times W \subseteq \rho $, then for any $w = w_1 \dots w_n $ and $ w^\prime = w^\prime_1 \dots w^\prime_m \in T$ we have that $(w_i,1),(1,w_j^\prime) \in \rho$ for all $i,j$, and so
    \begin{equation*}
        (w,w^\prime) = (w_1 , 1)\dots (w_n,1)(1,w_1^\prime)\dots (1,w_m^\prime)  \in \rho
    \end{equation*}
    Hence, under this assumption, $\rho = T \times T$.
    So it suffices to show that $W \times W \subseteq \rho$.
    To do this we will first prove several intermediate claims.

    \begin{claim}
        \label{1}
        For every $u\in A^\ast$  
       there exists $\lambda \in \Delta$ such that $(1,1) = (u,u)\lambda$.
    \end{claim}

    \begin{proof}
        The result is trivial if $u = \epsilon$ so let $u = a_1 \dots a_n$.
        By $2$-transitivity of $P$ there is $b_1 , \dots , b_n \in B$ such that $a_1 b_1 = 1 , a_2 b_2 = b_1 , \dots , a_n b_n = b_{n-1}$. Then if we let $\lambda = (b_n,b_n)$ we have:
        \begin{align*}
            (1,1) = (a_1 \dots a_n b_n ,a_1 \dots a_n b_n) = (u,u) \lambda .
        \end{align*}
    \end{proof}

    \begin{claim}
        \label{2}
        Let $u, x\in A^\ast$ be distinct. 
        Then there exist $\lambda \in \Delta$ and $\epsilon\neq p \in A^*$ such that $(1,p) = (u,x) \lambda$ or $(p,1) = (u,x) \lambda$.
    \end{claim}

    \begin{proof}
        First we note that if either of $u$ or $x$ are empty then the result follows by taking $\lambda = (1,1)$.
        We will use induction on $|u|+|x|$.
        As $u$ and $x$ are distinct, the base case is when $|u|+|x| = 1$, so one of $u,x$ is the empty word. 
        The result then follows by the observation at the start of the proof.
        Let $n > 1$ and suppose for all distinct words $u^\prime,x^\prime \in A^\ast$ with $|u^\prime| + |x^\prime| < n$ that there exists $\lambda \in \Delta$ such that $(u^\prime,x^\prime) \lambda = (1,p)$ or $(p,1)$.
        Now suppose that $|u| +|x| = n$ and that neither $u$ nor $x$ is empty.
        Write $u = a_1 \dots a_n$,  $x = a_1^\prime \dots a_m^\prime$.
        
        If $a_n = a_m^\prime$ then from Claim \ref{1}, there is $\lambda_1 \in \Delta$ such that $(1,1) = (a_n,a_m^\prime) \lambda_1$.
        The words $a_1 \dots a_{n-1} $ and $a_1^\prime \dots a_{m-1}^\prime$ are distinct as $u,x$ are distinct and $a_n = a_m^\prime$.
        By the inductive hypothesis there are $\lambda_2 \in \Delta$ and $\epsilon\neq p \in A^\ast$ such that $(a_1\dots a_{n-1}, a_1^\prime \dots a_{m-1}^\prime) \lambda_2 = (1,p)$ or $(1,p)$. Now take $\lambda = \lambda_1 \lambda_2$.

        Now assume $a_n \neq a_m^\prime$.
        Assume also that $|u| \geq |x|$, the case when $|u| \leq |x|$ is dual.
        There exists $b \in B$ such that $a_n b = a_n,  a_m^\prime b = 1$. Now $|u| > |a_1^\prime \dots a_{m-1}^\prime |$ so the words $u$ and $a_1^\prime \dots a_{m-1}^\prime$ are distinct.
        So by the inductive hypothesis there exist $\lambda^\prime \in \Delta$ and $\epsilon\neq p \in A^\ast$ such that $(u,a_1^\prime \dots a_{m-1}^\prime) \lambda^\prime = (1,p)$ or $(1,p)$.
        Now take $\lambda = (b,b) \lambda^\prime$.
    \end{proof}

    \begin{claim}
        \label{3}
        Let $u, x\in A^\ast$ and $w_1,w_2\in W$, with $u\neq x$.         Then there exist $\lambda, \mu \in \Delta$ such that $(w_1,w_2) = \mu (u,x) \lambda$
    \end{claim}

    \begin{proof}
        By Claim \ref{2}, there exists $\lambda^\prime \in \Delta$ and $\epsilon\neq p\in A^\ast$ such that $(u,x)\lambda^\prime = (1,p)$ or $(p,1)$.
        We will assume $(u,x)\lambda^\prime = (1,p)$, the case when $(u,x)\lambda^\prime = (p,1)$ is dual.
        Let $p = a_1 \dots a_n$.
        There exists $b_0,\dots,b_n \in B$ such that $a_1 b_1 = b_0 , \dots , a_{n-1} b_{n-1} = b_{n-2} , a_n b_n = b_{n-1}$ and $b_n \neq b_0$, so $pb_n = b_0$.
        Now we can pick $a \in A$ such that $ab_n = w_1$ and $ab_0 = w_2$.
        If we let $\mu = (a,a)$ and $\lambda = \lambda^\prime (b_n , b_n)$ then we have:
        \begin{align*}
            \mu (u,x) \lambda  = (a,a)(1,p)(b_n,b_n) = (a b_n , a b_0) = (w_1 , w_2) .
        \end{align*}
    \end{proof}

       The next three claims are dual to Claims \ref{1}, \ref{2}, \ref{3} and we omit their proofs.

    \begin{claim}
        \label{4}
        For every $v\in B^\ast$ there exists $\mu \in \Delta$ such that $(1,1) = \mu (v,v)$.\qed
    \end{claim}

    \begin{claim}
        \label{5}
        Let $v , y\in B^\ast$ be distinct.
        Then there exist $\mu \in \Delta$ and $\epsilon\neq q \in B^\ast$ such that $(1,q) = \mu (v,y)$ or $(q,1) = \mu (v,y)$.\qed
    \end{claim}

    \begin{claim}
        \label{6}
        Let $v,y\in B^\ast$ and $w_1,w_2\in W$, with $v$ and $y$ distinct.
        Then there exist $\lambda,\mu \in \Delta$ such that $(w_1,w_2) = \mu (v,y) \lambda$.\qed
    \end{claim}

    Now let $vsu,ytx \in T$ be distinct, and let $w_1,w_2 \in W$ be arbitrary.
    We will show $(w_1,w_2) \in \rho = \bigl\langle (vsu,ytx), \Delta \bigr\rangle$. 

    If $u = x$ and $v = y$ then it must be the case that $s \neq t$. 
    By Claims \ref{1} and \ref{4} there exist $\lambda, \mu \in \Delta$ such that $(u,x)\lambda = (1,1) = \mu (v,y)$.
    As the right action $\alpha$ is faithful, there is $a \in A$ such that $a_1 = a \triangleright s \neq a \triangleright t = a_2$.
    From Claim \ref{3}, there exist $\lambda^\prime , \mu^\prime \in \Delta$ such that $\mu^\prime (a_1,a_2) \lambda^\prime = (w_1,w_2)$.
    Now we have:
    \begin{equation*}
        (w_1,w_2) = \mu^\prime (a,a) \mu(vsu,ytx)\lambda \lambda^\prime \in \rho.
    \end{equation*}
    Now suppose that $u \neq x$ and $v = y$ (the case when $u = x$ and $v \neq y$ is dual). 
    By Claims \ref{2} and \ref{4} there exist $\lambda , \mu \in \Delta$ and $\epsilon\neq p\in A^\ast$ such that $\mu (v,y) = (1,1)$ and $(u,x) \lambda = (p,1)$ (again the case when $(u,x) \lambda = (1,p)$ is dual).
    Let $a$ be any element of $A$ and let $a_1 = a \triangleright s, a_2 = a \triangleright t$.
    As the words $p$ and $1 = \epsilon$ are distinct, the words $a_1 p$ and $a_2$ are also distinct.
    Hence, by Claim \ref{3}, there exist $\lambda^\prime , \mu^\prime \in \Delta$ such that $\mu^\prime (a_1 p ,a_2) \lambda^\prime = (w_1,w_2)$.
    Now we have:
    \begin{equation*}
        (w_1,w_2) = \mu^\prime (a,a) \mu (vsu,ytx) \lambda \lambda^\prime \in \rho
    \end{equation*}
    If instead we had that $u \neq x$ and $v \neq y$ then, by Claims \ref{2} and \ref{5}, there exist 
    ${\lambda,\mu \in \Delta}$ and $\epsilon\neq p \in A^\ast$, $\epsilon\neq q \in B^\ast$ such that 
    \[
    (u,x) \lambda \in\bigl\{ (1,p),(p,1)\bigr\}\quad\text{and}\quad \mu (v,y) = \bigl\{ (1,q),(q,1)\bigr\}.
    \]

    Here we will write $(u,x) \lambda = (p_1,p_2)$, noting that $| p_1 | \neq | p_2 |$.
    We will treat the case when $\mu(v,y) = (1,q)$, the other case ($(v,y) \mu = (q,1)$) is dual to this.

    Write $q = b_1 \dots b_n$, and let $a$ be any element of $A$.
    There exist $a_n,\dots, a_1 \in A$ such that 
    \[
    a_n b_n = a , a_{n-1} b_{n-1} = a_n , \dots ,a_1 b_1 = a_2.
    \]
    So we have $a_1 q = a$.
    The words $(a_1 \triangleright s) p_1$ and $(a \triangleright t) p_2$ are distinct (as $p_1$ and $p_2$ have different lengths), so, by Claim \ref{3}, there are $\lambda^\prime$, $\mu^\prime \in \Delta$ such that 
    \[
    \mu^\prime ( (a_1 \triangleright s) p_1 , (a \triangleright t) p_2 ) \lambda^\prime = (w_1,w_2).
    \]
     Hence we have
    \begin{equation*}
        (w_1,w_2) = \mu^\prime (a_1,a_1) \mu (vsu ,ytx) \lambda  \lambda^\prime \in \rho.
    \end{equation*}
    
    So in all cases $(w_1,w_2) \in \rho$ and hence $W \times W \subseteq \rho$.
\end{proof}

From here on when we refer to $\CS$, we will assume that $\alpha $ and $\beta$ are faithful actions, and that $P$ is $2$-transitive over $A \cup B \cup S$.

Here we will show that there is non-group DSC semigroups that are bisimple.
The Green's equivalence $\mathcal{D}$ on a semigroup $S$ is defined to be $\mathcal{R} \circ \mathcal{L} = \mathcal{L} \circ \mathcal{R}$, for a more detailed explanation see \cite[Section 2.1]{Ho95}.
We say a semigroup $S$ is \emph{bisimple} if $\mathcal{D} = S \times S$.

\begin{cor}\label{cor:D}
    The following statements involving DSC semigroups hold:
    \begin{thmenumerate}
        \item\label{it:D1} There exist non-group DSC semigroups.
        \item\label{it:D2} There exist non-group DSC semigroups that are regular and bisimple.
        \item\label{it:D3} Subsemigroups of DSC semigroups  are not necessarily DSC.
    \end{thmenumerate}
\end{cor}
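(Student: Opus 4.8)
The plan is to derive all three items from a single object, namely the Byleen monoid $T=\CS$ supplied by Theorem~\ref{byleen}, built from faithful actions $\alpha,\beta$ and a $2$-transitive matrix $P$ over $W=A\cup B\cup S$; by \cite{QR2010} such data exists with $A$ and $B$ countably infinite, and for $S$ we are free to take, say, the trivial group, in which case faithfulness of the actions is automatic. The key point is that Theorem~\ref{byleen} already does the heavy lifting: the only diagonal subsemigroups of $T\times T$ are $\Delta$ and $T\times T$, and both of these are congruences (the trivial and the universal one), so $T$ is DSC. What remains is to choose $S$ conveniently and read off extra structure of this one semigroup.

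For \ref{it:D1} it remains only to see that $T$ is not a group, and I would argue this from the uniqueness of the normal form $vsu$ ($v\in B^{\ast}$, $s\in S$, $u\in A^{\ast}$): for any $a\in A$ and any $w$ with normal form $vsu$, the product $wa$ equals the word $vs(ua)$, which is again in normal form but now has non-empty $A$-component; since $1=\epsilon$ has normal form with empty $A$-component, $wa\neq 1$, so $a$ has no left inverse and $T$ is not a group.

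For \ref{it:D2} I would take $S$ to be the trivial group and verify directly that $T$ is then regular and bisimple, the tool being $2$-transitivity of $P$. It yields, for each $a\in A$, some $\overline{a}\in B$ with $a\overline{a}=1$, and for each $b\in B$, some $\overline{b}\in A$ with $\overline{b}\,b=1$; concatenating in reverse order gives, for $u=a_{1}\cdots a_{l}\in A^{\ast}$, an element $\overline{u}=\overline{a_{l}}\cdots\overline{a_{1}}\in B^{\ast}$ with $u\overline{u}=1$ by telescoping, and dually for $v\in B^{\ast}$ an element $\overline{v}\in A^{\ast}$ with $\overline{v}\,v=1$. Then an element $w=vu$ of $T$ has inverse $\overline{u}\,\overline{v}$ (the identities $w\,\overline{u}\,\overline{v}\,w=w$ and $\overline{u}\,\overline{v}\,w\,\overline{u}\,\overline{v}=\overline{u}\,\overline{v}$ follow by telescoping adjacent $A$--$B$ pairs), so $T$ is regular; and $w=vu$ satisfies $w\,\mathcal{R}\,v$ (since $w\overline{u}=v$ and $w=vu$) while $1\,\mathcal{L}\,v$ (since $v$ has the left inverse $\overline{v}$), so every element of $T$ is $\mathcal{D}$-related to $1$ and $T$ is bisimple. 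Both properties are also established by Byleen in \cite{Byleen}, which one may cite instead. Together with \ref{it:D1}, this exhibits a non-group, regular, bisimple DSC semigroup.

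For \ref{it:D3} I would point to a subsemigroup of $T$ that fails to be DSC. The simplest is $\langle a\rangle=\{a,a^{2},a^{3},\dots\}$ for a fixed $a\in A$: by uniqueness of the normal form these powers are pairwise distinct and satisfy no relations, so $\langle a\rangle$ is a free monogenic semigroup, isomorphic to $(\N,+)$, which is not simple (for instance $\N\setminus\{1\}$ is a proper ideal) and hence not DSC by Theorem~\ref{simple semigroup}. Thus $T$ is a DSC semigroup while its subsemigroup $\langle a\rangle$ is not. I expect the only point needing genuine care to be the coordination in \ref{it:D2}: arranging a single choice of the data $S,\alpha,\beta,P$ that meets the hypotheses of Theorem~\ref{byleen} and at the same time delivers regularity and bisimplicity; the non-group argument, the telescoping computations, and the $(\N,+)$ subsemigroup are all routine.
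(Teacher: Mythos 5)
Your proof is correct and takes essentially the same route as the paper: both rely on Theorem \ref{byleen} for the DSC property, a normal-form argument to rule out being a group, regularity and bisimplicity of $\CS$, and a free (hence non-simple, hence non-DSC) subsemigroup for \ref{it:D3}. The only cosmetic differences are that the paper keeps $S$ general (showing $\CS$ is regular whenever $S$ is, and citing Byleen for bisimplicity) instead of specialising to the trivial group, and uses $A^\ast$ rather than $\langle a\rangle\cong(\N,+)$ as the non-DSC subsemigroup.
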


\begin{proof}
    \ref{it:D1} From Theorem \ref{byleen}, we have seen that $\CS$ is DSC. 
    To see that it is not a group, take any $a \in A$ and $vsu \in \CS$. Then $vsu a \neq 1$.
    So, the element $a$ has no group theoretic inverse, and hence
   $\CS$ is not a group.

   \ref{it:D2}
    If $S$ is bisimple, Byleen showed in \cite{Byleen} that $\CS$ is also bisimple. 
    Now suppose $S$ is regular. Let $vsu \in \CS$, with $v = b_1 \dots b_m$ and $u = a_1 \dots a_n$.
    There exist $s^\prime \in S$, $a_1^\prime , \dots , a_m^\prime \in A$ and $b_1^\prime , \dots , b_n^\prime \in B$ such that $s s^\prime s = s, s^\prime s s^\prime = s^\prime$ and
    \[ 
    a_1^\prime b_1 =  \dots = a_m^\prime b_m=a_1 b_1^\prime = \dots =a_n b_n^\prime =1.
    \] 
    If we set 
    \[
    y = b_n^\prime \dots b_1^\prime \quad\text{and}\quad  x = a_m^\prime \dots a_1^\prime,
    \]
     then $ys^\prime x$ is an inverse of $vsu$.
    Hence $\CS$ is regular.
    Therefore if a monoid $S$ is regular and bisimple (such as any group) then $\CS$ is regular and bisimple.
    
   \ref{it:D3}
    Consider the subsemigroup $A^\ast$ of $\CS$. This semigroup is not DSC (as it is not simple). Hence $\CS$ has non-DSC subsemigroups.
\end{proof}

\section{Closing Remarks and Further Questions}

Now that we have seen that there exist DSC semigroups that are not groups, one might want to try to understand them better, and perhaps completely classify. This seems to be out of reach at present, and some seemingly easy questions remain. For example, we have even seen that not even all (infinite) groups are DSC semigroups. So one may ask whether a description of DSC groups might be possible. Also, we do not know whether DSC semigroups are closed under formation of direct products.

Another direction one may take is to investigate more systematically the degree of interdependence of all four defining properties of a congruence. Specifically, a relation $\rho$ on a semigroup $S$ is a congruence if and only if it is reflexive, symmetric, transitive and compatible. DSC semigroups are precisely those for which reflexivity and compatibility imply symmetry and transitivity. What about other combinations of these properties?

\bibliography{Reflexive}
\bibliographystyle{abbrv}


\end{document}